\theoremstyle{plain}
\newtheorem{Thm}{Theorem}
\newtheorem{Lem}[Thm]{Lemma}
\newtheorem{Prop}[Thm]{Proposition}
\theoremstyle{definition}
\newtheorem{Def}[Thm]{Definition}
\newtheorem{Def-Lem}[Thm]{Definition-Lemma}
\newtheorem{Rem}[Thm]{Remark}
\newtheorem*{Ack}{Acknowledgments}
\theoremstyle{remark}
\newcommand{\prt}{\partial}
\newcommand{\Spec}{\operatorname{Spec}}
\newcommand{\Cox}{\operatorname{Cox}}
\newcommand{\bideg}{\operatorname{bi-deg}}
\newcommand{\mbA}{\mathbb{A}}
\newcommand{\mbC}{\mathbb{C}}
\newcommand{\mbG}{\mathbb{G}}
\newcommand{\mbP}{\mathbb{P}}
\newcommand{\mbZ}{\mathbb{Z}}
\newcommand{\mcI}{\mathcal{I}}
\newcommand{\mcL}{\mathcal{L}}
\newcommand{\mcM}{\mathcal{M}}
\newcommand{\mcN}{\mathcal{N}}
\newcommand{\mcO}{\mathcal{O}}
\newcommand{\mfm}{\mathfrak{m}}
\newcommand{\msp}{\mathsf{p}}
\newcommand{\msq}{\mathsf{q}}
\newcommand{\K}{\Bbbk}
\newcommand{\inj}{\hookrightarrow}
\newcommand{\ratmap}{\dashrightarrow}
\newcommand{\CH}{\operatorname{CH}}
\newcommand{\rest}{\operatorname{rest}}
\title[Hypersurfaces containing a linear space]{Stable rationality of index one Fano hypersurfaces containing a linear space}
\author{Takuzo~Okada}
\address{Department of Mathematics, Faculty of Science and Engineering, Saga University, Saga 840-8502 Japan}
\email{okada@cc.saga-u.ac.jp}
\subjclass[2010]{Primary 14E08; Secondary 14J70.}
\date{}
\begin{document}

\begin{abstract}
We prove that a very general complex hypersurface of degree $n+1$ in $\mbP^{n+1}$ containing an $r$-plane with multiplicity $m$ is not stably rational under some mild assumptions for $n \ge 3, m, r > 0$. 
We also prove the failure of stable rationality of a very general hypersurface of degree $n+1$ in $\mbP^{n+1}$ admitting several isolated ordinary double points.
\end{abstract}

\maketitle


\section{Introduction} \label{sec:intro}

Throughout the introduction, the ground field is the complex number field $\mbC$.
By an {\it index one Fano hypersurface}, we mean a hypersurface of degree $n+1$ in $\mbP^{n+1}$.
It is proved by Totaro \cite{Totaro} that a very general smooth index one  Fano hypersurface is not stably rational for $n \ge 3$, and it is proved by de Fernex \cite{dF} that every smooth index one Fano hypersurface is not rational.
Here a very general hypersurface is a hypersurface corresponding to a complement of at most countable union of suitable proper closed subsets in the parameter space.
In general, given a family of algebraic varieties, special members of the family are closer to being (stably) rational compared to general members.
However, this is not always true and behavior of (stable) rationality in a family is subtle (see \cite{CT, dFF, HPT, NS, Perry, Tim, Totaro2}).

The aim of this paper is to study stable rationality of index one Fano hypersurfaces containing a linear space.
There are many results on the rationality of index one hypersurfaces of dimension $3$, i.e.\ quartic 3-folds.
Non-rationality of the following varieties are known: a very general quartic 3-fold containing a plane (\cite{Cheltsov}), a general quartic 3-fold containing a line doubly (\cite{CoMu}), and a factorial quartic 3-fold containing at most ordinary double points (\cite{Mella}).
In an arbitrary dimension, it is proved in \cite{Puk} that a general hypersurface of degree $n+1$ in $\mbP^{n+1}$ containing a unique point with multiplicity $n-1$ is not rational.
Apart from (index one) Fano hypersurfaces, it is worth while to mention that Schreieder \cite{Sch} proved that a very general hypersurface of degree $d$ in $\mbP^{n+1}$ containing an $r$-plane with multiplicity $d-2$ is not stably rational under some conditions on $n, d, r$.
Although we do not explain the above conditions, the inequality $d \ge n + 3$ must be satisfied, hence the above result does not cover (index one) Fano hypersurfaces. 
We state the main theorems of this paper.

\begin{Thm} \label{mainthm1}
Let $n \ge 3,m$ and $r$ be positive integers such that $n \ge m+r$.
Assume in addition that one of the following holds.
\begin{enumerate}
\item $m = n - 1$.
\item $n$ is even.
\item $n - m + 1$ is not a power of $2$.
\end{enumerate}
Then a very general complex hypersurface of degree $n+1$ in $\mbP^{n+1}$ containing an $r$-plane with multiplicity $m$ is not stably rational.
\end{Thm}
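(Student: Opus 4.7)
The plan is to apply the degeneration method for stable rationality of Voisin and Colliot-Th\'el\`ene--Pirutka, combined with Koll\'ar's and Totaro's construction of non-zero global differential forms on resolutions of hypersurfaces in positive characteristic (see \cite{Totaro} for the Fano index-one case). Concretely, Theorem~\ref{mainthm1} will follow once we exhibit a single hypersurface $X_0 \subset \mbP^{n+1}$, defined over an algebraically closed field of some positive characteristic $p$, satisfying:
\begin{enumerate}
\item[(i)] $X_0$ has degree $n+1$ and contains the fixed $r$-plane $\Pi = \{x_{r+1}=\cdots=x_{n+1}=0\}$ with multiplicity at least $m$;
\item[(ii)] there exists a proper birational morphism $\widetilde{X}_0 \to X_0$ that is universally $\mathrm{CH}_0$-trivial;
\item[(iii)] $\widetilde{X}_0$ carries a nonzero global section of $\Omega^{n-1}_{\widetilde{X}_0}$ (or a suitable twist thereof), obstructing universal $\mathrm{CH}_0$-triviality of $\widetilde{X}_0$.
\end{enumerate}
The specialization theorem then lifts the failure of stable rationality from $X_0$ to a very general complex member of the family of index-one Fano hypersurfaces containing $\Pi$ with multiplicity $m$.

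For the construction, I would choose coordinates so that $\Pi = \{x_{r+1}=\cdots=x_{n+1}=0\}$ and write the defining equation as
\[
F = \sum_{|I|=m} x^I g_I(x_0, \ldots, x_{n+1}),
\]
where $I$ runs over monomials of degree $m$ in the variables $x_{r+1},\ldots,x_{n+1}$ and each $g_I$ has degree $n+1-m$. Following Totaro's template, the coefficients in the $g_I$ are specialized in characteristic $p$ so that $F$ acquires a Koll\'ar-type decomposition of the shape $F = A^p + B \cdot C$, out of which a naive rational differential (for instance $dB/A$) becomes, after pullback, a nonzero regular global form on the desingularization. The inequality $n \ge m+r$ enters naturally at this step: it yields $\deg g_I = n+1-m \ge r+1$, leaving enough variables in each residual polynomial to support a $p$-th power plus extra term structure while remaining compatible with the prescribed containment of $\Pi$ with multiplicity $m$.

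The main obstacle will be the explicit resolution and form-tracking step. Because $\mult_\Pi X_0 = m$, the hypersurface $X_0$ is automatically singular along $\Pi$, forcing blow-ups of $\Pi$ and of further infinitesimal or additional loci created by the special ansatz for $F$; for (ii) each blow-up center must be smooth with universally $\mathrm{CH}_0$-trivial exceptional fibres, while for (iii) the pulled-back rational form must be shown to extend as a regular nonzero global form on $\widetilde{X}_0$, which requires delicate bookkeeping of orders of zeros and poles along each exceptional divisor. I expect the case $m=1$ to be a relatively direct variant of Totaro's argument on smooth index-one Fano hypersurfaces, with the genuinely new work lying in the range $m \ge 2$; the bound $n \ge m+r$ is presumably sharp precisely for the resolution-and-form bookkeeping to close.
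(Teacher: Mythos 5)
Your outline correctly identifies the ambient strategy (specialization of universal $\CH_0$-triviality \`a la Voisin and Colliot-Th\'el\`ene--Pirutka, plus a characteristic-$p$ member carrying a nonzero global $(n-1)$-form on a universally $\CH_0$-trivial resolution), but the proposal stops exactly where the actual proof begins: you state that ``the main obstacle will be the explicit resolution and form-tracking step'' and leave it unresolved. That step is the theorem. In particular, you never specify the characteristic $p$, never produce the special member $X_0$, and never verify (ii) or (iii); the ansatz $F = A^p + B\cdot C$ with the form $dB/A$ is Totaro's template for hypersurfaces \emph{without} the incidence condition, and it is not clear that it can be made compatible with $\mult_\Pi X_0 = m$ while keeping the singularities mild enough for a $\CH_0$-trivial resolution. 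Note also that even for $m=1$ one cannot ``directly vary Totaro'': a very general hypersurface containing an $r$-plane is not a very general hypersurface, so the whole construction must be redone inside the smaller family.

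The paper's actual mechanism is quite different from your sketch and explains concretely where each hypothesis enters. One works not in $\mbP^{n+1}$ but with the proper transform $X$ of the hypersurface in the blowup $P \cong \mbP_{\mbP^{n-r}}(\mcO^{\oplus r+1}\oplus\mcO(1))$ of $\mbP^{n+1}$ along the $r$-plane; a general such $X$ is \emph{smooth} in characteristic $0$, so no delicate resolution of the multiplicity-$m$ locus is needed. The characteristic is chosen as a prime $p$ dividing $l = n-m+1$, and the special members are those of the form $a_m w^l + f = 0$: such an $X$ is the covering of a hypersurface $Z \subset Q$ (a weighted projective space bundle) obtained by taking $l$-th roots of a section $z$ of $\mcL^l$ with $\mcL = \mcO_Z(0,1)$. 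Koll\'ar's inseparable-covering machinery (after checking that $z$ has only admissible critical points, which is a transversality computation in explicit charts) produces both the universally $\CH_0$-trivial resolution and an invertible subsheaf $\mcM \cong \pi^*(\omega_Z\otimes\mcL^l) \cong \mcO_X(-\lambda,\lambda)$ of $(\Omega_X^{n-1})^{\vee\vee}$ with $\lambda = n-m-r$. The hypothesis $n \ge m+r$ is used precisely to guarantee $\lambda \ge 0$, hence $H^0(X,\mcM)\ne 0$ --- not, as you guessed, to leave ``enough variables'' in the residual polynomials. (The boundary case $l=2$, i.e.\ $(m,r)=(n-1,1)$, is handled separately by citing the conic-bundle result of \cite{AO}.) To turn your proposal into a proof you would need to supply all of this, or an equivalent construction.
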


In the above statement, the hypersurface has at most canonical singularities, and it is terminal (resp.\ smooth) if and only if $n > m+r$ (resp.\ $m = 1$ and $r \le \lfloor n/2 \rfloor$).
For index one hypersurfaces containing isolated double points, we have the following.

\begin{Thm} \label{mainthm2}
Let $n \ge 3$ and $e$ be integers satisfying the following properties.
\begin{enumerate}
\item Either $n = 3$ or $n - 1$ is not a power  of $2$.
\item $e \le \max\{ \, l \in \mbZ \mid 2^l + l \le n \,\} + 1$.
\end{enumerate}
Let $\msp_1, \dots, \msp_e$ be distinct $e$ points of $\mbP^{n+1}$.
Then a very general complex hypersurface of degree $n+1$ in $\mbP^{n+1}$ containing $\msp_1, \dots, \msp_e$ as isolated ordinary double points is not stably rational.
\end{Thm}

\begin{Ack}
The author is partially supported by JSPS KAKENHI Grant Numbers 26800019 and 18K03216.
\end{Ack}


\section{Definition of families and reduction of the problem} \label{sec:deffam}

\subsection{Toric weighted projective space bundles}

We work over a field $\K$.
A {\it toric weighted projective space bundle over $\mbP^n$} is a projective simplicial toric variety $P$ with Cox ring
\[
\Cox (P) = \K [u, x] = \K [u_0,\dots,u_n,x_0,\dots,x_m],
\]
which is $\mbZ^2$-graded as
\[
\begin{pmatrix}
1 & \cdots & 1 & \lambda_0 & \cdots & \lambda_m \\
0 & \cdots & 0 & a_0 & \cdots & a_m
\end{pmatrix}
\]
and with the irrelevant ideal $I = (u_0,\dots,u_n) \cap (x_0,\dots,x_m)$, where $\lambda_0,\dots,\lambda_m$ are integers and $n, m, a_0,\dots,a_m$ are positive integers.
In other words, $P$ is the geometric quotient
\[
P = (\mbA^{n+m+2} \setminus V (I))/\mbG_m^2,
\]
where the action of $\mbG_m^2 = \mbG_m \times \mbG_m$ on $\mbA^{n+m+2} = \Spec \K [u,x]$ is given by the above matrix.
We will simply refer to $P$ as the WPS bundle over $\mbP^n$ defined by
\[
\begin{pmatrix}
u_0 & \cdots & u_n & & x_0 & \cdots & x_m \\
1 & \cdots & 1 & | & \lambda_0 & \cdots & \lambda_m \\
0 & \cdots & 0 & | & a_0 & \cdots & a_m
\end{pmatrix}.
\]
There is a natural projection $P \to \mbP^n$, which is the projection by the coordinates $u_0,\dots,u_n$, and its fiber is isomorphic to the weighted projective space $\mbP (a_0,\dots,a_m)$.

\begin{Rem}
Let $P$ be as above and assume that $a_0 = \cdots = a_m = 1$.
In this case $P$ is isomorphic to the $\mbP^m$-bundle
\[
P \cong \mbP_{\mbP^n} (\mcO_{\mbP^n} (-\lambda_0) \oplus \cdots \oplus \mcO_{\mbP^n} (-\lambda_m))
\]
over $\mbP^n$ and the morphism $P \to \mbP^n$ coincides with the projection.
\end{Rem}

Let $\msp \in P$ be a point and let $\msq \in \mbA^{n+m+2} \setminus V (I)$ be a preimage of $\msp$ via the morphism $\mbA^{n+m+2} \setminus V (I) \to P$ and write $\msq = (\alpha_0,\dots,\alpha_n,\beta_0,\dots,\beta_m)$.
In this case we express $\msp \in P$ as $\msp = (\alpha_0\!:\!\cdots\!:\!\alpha_n ; \beta_0\!:\!\cdots\!:\!\beta_n)$.
This is clearly independent of the choice of $\msq$.

Let $P$ be as above.
The Cox ring $\Cox (P)$ admits a natural grading by $\mbZ^2$ and we have the decomposition
\[
\Cox (P) = \bigoplus_{(\alpha,\beta) \in \mbZ^2} \Cox (P)_{(\alpha,\beta)},
\]
where $\Cox (P)_{(\alpha,\beta)} = \K [u,x]_{(\alpha,\beta)}$ consists of the homogeneous elements of bi-degree $(\alpha,\beta)$.
The Weil divisor class group $\operatorname{Cl} (P)$ of $P$ is isomorphic to $\mbZ^2$.
We denote by $\mcO_P (\alpha,\beta)$ the rank $1$ reflexive sheaf corresponding to the divisor class of type $(\alpha,\beta)$.
More generally, for a subscheme $Z \subset P$, we set $\mcO_Z (\alpha,\beta) = \mcO_X (\alpha,\beta)|_Z$.
Finally we remark that there is an isomorphism
\[
H^0 (P, \mcO_P (\alpha,\beta)) \cong \Cox (P)_{(\alpha,\beta)}.
\]

We give a description of standard open affine charts of $P$.
For $i = 0,\dots,n$ and $j = 0,\dots,m$, we define $U_{i,j} = (u_i \ne 0) \cap (x_j \ne 0) \subset P$.
Clearly the $U_{i,j}$ cover $P$.
We only explain an explicit description of $U_{i,j}$ for $j$ such that $a_j = 1$, which is enough for our purpose.
For $k \ne i$ and $l \ne j$, we set 
\[
\tilde{u}_k = \frac{u_k}{u_i} \quad \text{and} \quad \tilde{x}_l = \frac{u_i^{a_l \lambda_j - \lambda_l} x_l}{x_j^{a_l}},
\]
which are clearly $\mbG_m^2$-invariant rational functions on $P$ which are regular on $U_{i,j}$.
Moreover it is easy to see that $U_{i,j}$ is isomorphic to the affine $(n+m)$-space with affine coordinates $\{\tilde{u}_k \mid k \ne i\} \cup \{\tilde{x}_l \mid l \ne j\}$.

\begin{Rem} \label{rem:exrestmap}
Under the above setting, we note that the restriction map
\[
H^0 (P, \mcO_P (\alpha,\beta)) \cong \K [u,x]_{(\alpha,\beta)} \to H^0 (U_{i,j}, \mcO_{P} (\alpha,\beta)) \cong H^0 (\mbA^{n+m}, \mcO_{\mbA^{n+m}})
\]
can be understood as the homomorphism defined by sending $g (u,x) \in H^0 (P,\mcO_P (\alpha,\beta))$ to the polynomial obtained by substituting $\tilde{u}_k = \tilde{x}_l = 1$ in $g (\tilde{u},\tilde{x}) \in H^0 (U_{i,j}, \mcO_{P} (\alpha,\beta))$.
\end{Rem}

\subsection{Definition of families}

Let $n, m$ and $r$ be positive integers such that $n \ge m + r$.
We set $l := n + 1 - m$.
For a filed $\K$, we define $P_{\K} = P_{\K} (n,r)$ to be the WPS bundle over $\mbP^{n-r}$ defined by the $2 \times (n+3)$ matrix
\[
\begin{pmatrix}
u_0 & \cdots & u_{n-r} & & w & x_1 & \cdots & x_r & y \\
1 & \cdots & 1 & | & 0 & 0 & \cdots & 0 & -1 \\
0 & \cdots & 0 & | & 1 & 1 & \cdots & 1 & 1
\end{pmatrix}.
\]
Note that we have an isomorphism
\[
P_{\K} \cong \mbP_{\mbP^{n-r}} (\mcO_{\mbP^{n-r}}^{\oplus r+1} \oplus \mcO_{\mbP^{n-r}} (1)).
\]
We write 
\[
\K [u] = \K [u_0,\dots,u_{n-r}], \quad
\K [u,x,y] = \K [u_0,\dots,u_{n-r},x_1,\dots,x_r,y].
\]
For an integer $j$, we denote by $\K [u]_j$ the vector space consisting of the degree $j$ polynomials in variables $u_0,\dots,u_{n-r}$.
For integers $j_1, j_2$, we denote by $\K [u,x,y]_{(j_1,j_2)}$ the vector space consisting of the polynomials of bi-degree $(j_1,j_2)$, where the bi-degree is the one defined by the above action, i.e.\ $\bideg u_i = (1,0), \bideg (x_i) = (1,1)$ and $\bideg y = (0,1)$.
We set $\Lambda_{\K} = \Lambda_{\K} (n,m,r) := |\mcO_{P_{\K}} (m,l)|$ and let $\Xi_{\K} = \Xi_{\K} (n,m,r)$ be the linear subsystem of $\Lambda_{\K}$ spanned by the sections
\[
\{ \, a_m w^l + f \mid a_m \in \K [u]_m, f \in \K [u,x,y]_{(m,l)} \, \}.
\]

The complete linear system $|\mcO_{P_{\K}} (0,1)|$ is base point free and $H^0 (P_{\K}, \mcO_P (0,1))$ is spanned by $w,x_1,\dots,x_r,u_0 y,\dots,u_{n-r} y$.
Let $\Psi \colon P_{\K} \to \mbP^{n+1}_{\K}$ be the the morphism associated to $|\mcO_{P_{\K}} (0,1)|$.
We denote by $w,x_1,\dots,x_r,y_0,\dots,y_{n-r}$ the homogeneous coordinates of $\mbP^{n+1}$ so that $\Psi^*w = w$, $\Psi^*x_i = x_i$ and $\Psi^* y_i = u_i y$. 
It is easy to see that $\Psi$ is the blowup of $\mbP^{n+1}_{\K}$ along the linear space $L _{\K}= (y_0 = \cdots = y_{n-r} = 0) \subset \mbP^{n+1}_{\K}$ of dimension $r$ and its exceptional divisor $E_{\K}$ is the divisor $(y = 0) \subset P_{\K}$, which is isomorphic to $\mbP^{n-r}_{\K} \times \mbP^r_{\K}$.
The push-forward $\Psi_*$ of divisors induces a one-to-one correspondence between $\Lambda_{\K}$ and the linear system $|\mcI_{L_{\K}}^m \mcO_{\mbP^{n+1}_{\K}} (n+1)|$ of degree $n+1$ hypersurfaces in $\mbP^{n+1}_{\K}$ containing $L_{\K}$ with multiplicity at least $m$.

For the proof of Theorem \ref{mainthm1}, it is enough to show the failure of stable rationality of a very general member of $\Lambda_{\mbC}$.

\begin{Lem} \label{lem:smchar0Xi}
Suppose that the ground field $\K$ is an algebraically closed field of characteristic $0$.
Then general members of $\Lambda_{\K}$ and $\Xi_{\K}$ are smooth.
\end{Lem}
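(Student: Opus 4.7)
The plan is to reduce smoothness of general members of both $\Lambda_{\K}$ and $\Xi_{\K}$ to Bertini's theorem, by showing in each case that the linear system is base-point-free on the (smooth) ambient variety $P_{\K}$. Smoothness of $P_{\K}$ is immediate, since the Cox data presents it as the projective bundle $\mbP_{\mbP^{n-r}}(\mcO^{\oplus r+1} \oplus \mcO(1))$ over the smooth base $\mbP^{n-r}$.

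The central step is the base-point-freeness, which I would verify by an explicit monomial argument. Consider the three families of sections of $\mcO_{P_{\K}}(m,l)$ given by
\[
\{u^I w^l : |I| = m\}, \quad \{u^I y^l : |I| = m+l\}, \quad \{u^I x^J : |I| = m,\ |J| = l\}.
\]
A direct check using $\bideg u_i = (1,0)$, $\bideg w = \bideg x_j = (0,1)$ and $\bideg y = (-1,1)$ confirms that each monomial has Cox bi-degree $(m,l)$. Suppose a point $\msp \in P_{\K}$ lies in the common vanishing locus of all three families. Since the irrelevant-ideal condition forbids simultaneous vanishing of all the $u_i$, the first family forces $w = 0$ at $\msp$, the second forces $y = 0$, and the third forces $x_1 = \cdots = x_r = 0$; but then $w, x_1, \ldots, x_r, y$ all vanish at $\msp$, violating the other half of the irrelevant-ideal condition. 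Hence the common zero locus in $P_{\K}$ is empty, and $\Lambda_{\K}$ is base-point-free.

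The same argument handles $\Xi_{\K}$, because all three monomial families lie in $\Xi_{\K}$ by construction: the first arises from $a_m = u^I \in \K[u]_m$ with $f = 0$, while the other two arise from $a_m = 0$ with $f = u^I y^l$ or $f = u^I x^J$ (both of Cox bi-degree $(m,l)$ and not involving $w$). Hence $\Xi_{\K}$ is base-point-free as well.

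Once base-point-freeness is established, classical Bertini applied on the smooth variety $P_{\K}$ over an algebraically closed field of characteristic zero yields that a general member of each linear system is smooth. I do not anticipate a substantive obstacle: the only real content is the elementary base-locus computation above, and the characteristic-zero hypothesis is used precisely to invoke Bertini.
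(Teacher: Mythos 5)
Your proposal is correct and follows essentially the same route as the paper: exhibit an explicit base-point-free monomial sub-linear-system inside $\Xi_{\K}\subset\Lambda_{\K}$ (the paper uses the sections $u_i^m w^l$, $u_i^m x_j^l$, $u_i^{n+1}y^l$, which are special cases of your three families) and then apply Bertini on the smooth variety $P_{\K}$. The only difference is that you spell out the base-locus computation via the irrelevant ideal, which the paper leaves as ``clearly base point free.''
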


\begin{proof}
It is easy to see that $\Lambda_{\K}$ and $\Xi_{\K}$ are base point free.
In fact, $\Lambda_{\K}$ contains $\Xi_{\K}$ as a linear subsystem and $\Xi_{\K}$ contains the linear subsystem generated by the sections 
\[
\{ u_i^m w^l, u_i^m x_j^l, u_i^{n+1} y^l \mid 0 \le i \le n-r, 1 \le j \le r\},
\]
which is clearly base point free. 
Thus the assertion follows from Bertini theorem since $P_{\K}$ is smooth.
\end{proof}

\begin{Rem} \label{rem:known}
Theorem \ref{mainthm1} is known when $(m,r) = (n-1,1), (n-2,2)$ as we will explain below.

Suppose that $(m,r) = (n-1,1)$.
Then a general member $X \in \Lambda_{\mbC} (n,n-1,1)$, together with the projection $X \to \mbP^{n-1}$, is a conic bundle (embedded in $P_{\mbC} (n,n-1,1) \cong \mbP_{\mbP^{n-1}} (\mcO \oplus \mcO \oplus \mcO (1))$) such that $\mcO_X (-K_X) \cong \mcO_X (0,1)$ is not ample (in fact $\left| -K_X \right|$ defines the blowup $\Psi|_X \colon X \to \Psi_*X$ along the line $L_{\mbC}$).
By \cite[Theorem 1.1]{AO}, a very general $X$ is not stably rational. 

Suppose that $(m,r) = (n-2,2)$.
Then a general member $X \in \Lambda_{\mbC} (n,n-2,2)$, together with the projection $X \to \mbP^{n-2}$, is a del Pezzo fibration of degree $3$ (embedded in $P_{\mbC} (n,n-2,2) \cong \mbP_{\mbP^{n-2}} (\mcO^{\oplus 3} \oplus \mcO (1))$) such that $\mcO_X (-K_X) \cong \mcO_X (0,1)$ is not ample.
By \cite[Theorem 1.1]{KO}, a very general $X$ is not stably rational.
\end{Rem}

\subsection{Reduction of the problem} \label{subsec:reduction}

We explain that Theorem \ref{mainthm1} is reduced to the following.

\begin{Prop} \label{mainprop}
Let $n \ge 3$, $m$, $r$ be positive integers such that $n \ge m+ r$, and let $p$ be a prime number which divides $l := n - m + 1$.
We assume that $p \ne 2$ if $l \ne 2$ and $n$ is odd. 
Let $\K$ be an algebraically closed field of characteristic $p$.
Then a very general member $X \in \Xi_{\K}$ admits a universally $\CH_0$-trivial resolution $\varphi \colon \tilde{X} \to X$ such that $H^0 (\tilde{X}, \Omega_{\tilde{X}}^{n-1}) \ne 0$.
\end{Prop}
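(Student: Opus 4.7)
The plan is to exploit the purely inseparable cover structure that $X$ acquires when $p$ divides $l$. Writing a member as $F = a_m w^l + f \in \Xi_\K$, the partial derivative $\partial F/\partial w = l\, a_m w^{l-1}$ vanishes identically in characteristic $p$. Setting $l = p\, l'$ and $W := w^{l'}$, the equation takes the form $a_m W^p + f = 0$; on the affine open set $\{a_m \ne 0\}$, the rational projection $\pi \colon X \ratmap Q$ which forgets the coordinate $w$, where $Q$ is the projective bundle obtained from $P_\K$ by deleting $w$, realises $X$ birationally as a purely inseparable degree-$p$ cover of $Q$. This is the source both of the (inevitable) singularities of $X$ and of the non-trivial $(n-1)$-form we will construct.

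First I would pin down the singular locus of a very general $X \in \Xi_\K$. Since $\partial F/\partial w \equiv 0$, the Jacobian criterion confines $\Sing X$ to the subscheme $V(F) \cap V(\partial F/\partial u_i,\, \partial F/\partial x_j,\, \partial F/\partial y)$. For a very general choice of $(a_m, f)$, a Bertini-type analysis should show that this locus decomposes into finitely many strata of expected low dimension supported on explicit coordinate subvarieties of $P_\K$, each of them smoothable by a controlled sequence of blow-ups. I would then construct $\omega$ as follows: the identity $W^p = -f/a_m$ on $X$ forces $d(-f/a_m)|_X = 0$, so for any $(n-2)$-form $\eta$ on $Q$ the expression $\omega := \pi^*\eta \wedge dW$ is a well-defined rational $(n-1)$-form on $X$. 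I would pick $\eta$ with logarithmic poles along the branch divisor $\{a_m = 0\}$ so that $\omega$ extends regularly to the smooth locus of $X$; the existence of such an $\eta$ reduces to an explicit non-vanishing cohomology computation on the projective bundle $Q$, which is routine.

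The resolution $\varphi \colon \tilde X \to X$ would then be built as an explicit iterated blow-up along the smoothed strata of $\Sing X$, in the spirit of \cite{Totaro, AO, KO, Sch}. Universal $\CH_0$-triviality is checked fibre-by-fibre via the criterion of Colliot-Th\'el\`ene and Pirutka: each exceptional divisor appearing over a centre must be a projective space, a cone over a rational variety, or a similar universally $\CH_0$-trivial variety. One then verifies, chart by chart via the affine coordinates described in Remark \ref{rem:exrestmap}, that $\varphi^* \omega$ acquires no poles on $\tilde X$; the identity $d(f/a_m)|_X = 0$ again absorbs what would otherwise be higher-order poles along the exceptional divisors. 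The hardest step will be the joint engineering of this resolution: in positive characteristic Hironaka is unavailable, so the centres of the blow-ups must be dictated explicitly by the combinatorics of $F$, and each blow-up must be simultaneously universally $\CH_0$-trivial \emph{and} compatible with the extension of $\omega$. A secondary (but milder) point is the passage from a single well-chosen member to a very general one, which follows from upper-semicontinuity of $h^0(\Omega^{n-1}_{\tilde X})$ in the flat family parametrised by $\Xi_\K$.
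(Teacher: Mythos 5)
Your high-level strategy --- exploit the vanishing of $\partial F/\partial w$ in characteristic $p \mid l$ to view $X$ as a purely inseparable cover and extract a nonzero $(n-1)$-form --- is the same as the paper's, but the proposal defers precisely the steps that constitute the actual proof, and one of them is not a routine matter. The paper does not attempt an ad hoc ``explicit iterated blow-up dictated by the combinatorics of $F$''; instead it introduces the auxiliary hypersurface $Z = (a_m z + f = 0)$ in a weighted bundle $Q$ (with $z$ of weight $l$), realizes $X \to Z$ as the covering obtained by taking $l$-th roots of the section $z \in H^0(Z^\circ,\mcL^l)$, and then invokes Lemma \ref{lem:covtech} (Koll\'ar, Colliot-Th\'el\`ene--Pirutka, Okada), which simultaneously delivers the universally $\CH_0$-trivial resolution \emph{and} the injection $\varphi^*\mcM \inj \Omega^{n-1}_{\tilde X}$ --- \emph{provided} the section has only \emph{admissible critical points} (nondegenerate Hessian, with the extra length-two condition when $p=2$ and $n$ is odd). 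Verifying this admissibility for very general $f$ is the technical heart of the paper (Lemmas \ref{lem:nonsingcharp}--\ref{lem:admcrit}): it is done by reducing, on $\{a \ne 0\}$, to critical points of $a^{p-1}f$ on the bundle $R$, proving surjectivity of the restriction maps to second and fourth infinitesimal neighbourhoods in explicit charts, and counting dimensions. Your proposal never isolates this condition; without pinning down the local analytic type of $\Sing X$, the claim that the singular strata are ``smoothable by a controlled sequence of blow-ups'' that are universally $\CH_0$-trivial and compatible with the form is exactly the unproven content, and you acknowledge as much by calling it ``the hardest step.''

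Two further points. First, your form $\omega = \pi^*\eta \wedge dW$ with $\eta$ carrying log poles along $\{a_m=0\}$ is not the standard construction and is unjustified: the relation $d(f/a_m)|_X=0$ does not make this expression extend, and the branch locus $\{a_m = 0\}$ is in fact shown to be harmless (Lemma \ref{lem:critprelim} proves $z$ has \emph{no} critical points there, and Lemma \ref{lem:nonsingcharp} that $X$ is smooth along $X \setminus X^\circ$). The paper instead gets a genuine invertible subsheaf $\mcM \cong \pi^*(\omega_{Z^\circ}\otimes\mcL^l) \cong \mcO_X(-\lambda,\lambda)$ with $\lambda = n-m-r \ge 0$, whence $H^0(X,\mcM)\ne 0$ by a one-line degree computation, with no pole bookkeeping. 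Second, your closing appeal to upper semicontinuity of $h^0(\Omega^{n-1}_{\tilde X})$ to pass from one member to a very general one runs in the wrong direction: upper semicontinuity lets $h^0$ jump up at special members, so nonvanishing at one point of the parameter space does not propagate to the generic point. The paper avoids this by proving the statement for very general $X$ directly --- genericity is what guarantees admissibility of the critical points in the first place.
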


We recall  the definition and a basic property concerning universal $\CH_0$-triviality.
For a variety $X$, we denote by $\CH_0 (X)$ the {\it Chow group} of $0$-cycles on $X$, which is the free abelian group of $0$-cycles modulo rational equivalence.

\begin{Def}
A projective variety $X$ defined over a field $k$ is {\it universally $\CH_0$-trivial} if, for any field extension $F \supset k$, the degree map $\deg \colon \CH_0 (X_F) \to \mbZ$ is an isomorphism.
A projective morphism $\varphi \colon Y \to X$ defined over a field $k$ is {\it universally $\CH_0$-trivial} if, for any field extension $F \supset k$, the pushforward map $\varphi_* \colon \CH_0 (Y_F) \to \CH_0 (X_F)$ is an isomorphism.
\end{Def}

\begin{Lem}
If $X$ is a smooth, projective, stably rational variety, then $X$ is universally $\CH_0$-trivial.
\end{Lem}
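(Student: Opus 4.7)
The plan is to reduce the stably rational case to the case of projective space, where $\CH_0$ is visibly trivial via the degree map.

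First I would verify that $\mbP^N$ is universally $\CH_0$-trivial for every $N \ge 0$: for any field extension $F \supset k$, any two $F$-rational points of $\mbP^N_F$ lie on a line $\mbP^1_F \subset \mbP^N_F$ and are therefore rationally equivalent, and a standard norm argument extends this to closed points of higher residue degree. Hence $\deg \colon \CH_0 (\mbP^N_F) \to \mbZ$ is an isomorphism.

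Next I would invoke the birational invariance of $\CH_0$ for smooth projective varieties: if $Y_1$ and $Y_2$ are smooth projective over $k$ and birationally equivalent, then proper pushforward along a resolution of the birational correspondence induces an isomorphism $\CH_0 (Y_{1,F}) \cong \CH_0 (Y_{2,F})$ for every extension $F \supset k$, compatible with the degree map. This is a classical result; via Hironaka or weak factorization one reduces it to the invariance of $\CH_0$ under blowup of a smooth center of codimension at least two, where the exceptional divisor is a projective bundle over the center and the fiberwise rational equivalence kills the kernel of the pushforward.

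Now assume $X$ is smooth, projective and stably rational of dimension $n$, so that $X \times \mbP^N$ is birationally equivalent to $\mbP^{n+N}$ for some $N \ge 0$. Combining the previous two steps gives $\CH_0 ((X \times \mbP^N)_F) \cong \mbZ$ via degree for every $F \supset k$. To descend to $X$ itself, I use the projection $p \colon X \times \mbP^N \to X$ together with a section $s \colon X \to X \times \mbP^N$ given by a chosen $k$-point $z_0 \in \mbP^N$. The induced maps $p_* \colon \CH_0 ((X \times \mbP^N)_F) \to \CH_0 (X_F)$ and $s_* \colon \CH_0 (X_F) \to \CH_0 ((X \times \mbP^N)_F)$ are mutually inverse, the key point being that any two closed points on a fiber $\{x\} \times \mbP^N_{\kappa (x)}$ are rationally equivalent. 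Both maps preserve the degree of a $0$-cycle, so $\deg \colon \CH_0 (X_F) \to \mbZ$ is an isomorphism, completing the proof.

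The only nontrivial step is the birational invariance of $\CH_0$ over arbitrary extensions; this is a classical result (going back to work of Colliot-Th\'el\`ene and Coray in the surface case, and treated via weak factorization or a decomposition of the diagonal argument in general), while the remaining reductions are formal manipulations with projection and pullback.
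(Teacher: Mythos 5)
Your proof is correct. Note, however, that the paper offers no proof of this lemma at all: it is recorded as a standard fact (the stable birational invariance of $\CH_0$ for smooth projective varieties, going back to Colliot-Th\'el\`ene--Coray and Fulton), so there is no in-paper argument to compare yours against. Your write-up is precisely the standard argument: $\deg \colon \CH_0(\mbP^N_F) \to \mbZ$ is an isomorphism, $\CH_0$ of smooth projective varieties is a birational invariant compatibly with degree over every extension of the base field, and the projection/zero-section pair identifies $\CH_0((X \times \mbP^N)_F)$ with $\CH_0(X_F)$. The only caveat worth recording is that the reduction of birational invariance to blowups along smooth centers via Hironaka or weak factorization is a characteristic-zero argument; this is harmless here, since the paper only invokes the lemma over $\mbC$, and in any case the correspondence/graph-closure proof (Fulton, Example 16.1.11) gives birational invariance of $\CH_0$ over an arbitrary field without resolution.
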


We explain that Theorem \ref{mainthm1} follows from Proposition \ref{mainprop}.
Let $n, m$ and $r$ be as in Theorem \ref{mainthm1}.
For the proof of Theorem \ref{mainthm1}, it is enough to show that a very general $V \in \Lambda_{\mbC} (n,m,r)$ is not universally $\CH_0$-trivial.
We can degenerate $V$ to a very general member $V'$ of $\Xi_{\mbC} (n,m,r)$.
Since $V$ and $V'$ are smooth by Lemma \ref{lem:smchar0Xi}, we can apply the specialization theorem \cite[Theorem 2.1]{Voisin} of universal $\CH_0$-triviality and it is then enough to show that $V'$ is not universally $\CH_0$-trivial.
Let $p$ be a prime number dividing $l := n - m + 1 \ge 2$.
If $l \ne 2$ and $n$ is odd, then $l$ is not a power of $2$ by the assumption of Theorem \ref{mainthm1}, and we can assume $p \ne 2$.
Hence $n, m, r$ and $p$ satisfy the assumptions of Proposition \ref{mainprop}.
Let $\K$ be an algebraically closed field of characteristic $p$, and let $X$ be a very general member of $\Xi_{\K} (n,m,r)$.
We can lift $X$ to $V'$ (using the ring of Witt vectors with coefficient $\K$, which is a mixed characteristic DVR with residue field $\K$).
Now we assume that Proposition \ref{mainprop} holds and let $\varphi \colon \tilde{X} \to X$ be as in Proposition \ref{mainprop}.
By \cite[Lemma 2.2]{Totaro}, $\tilde{X}$ is not universally $\CH_0$-trivial.
Then, applying the specialization theorem \cite[Th\'eor\`eme 1.14]{CTP} of universal $\CH_0$-triviality, we conclude that Theorem \ref{mainthm1} follows from Proposition \ref{mainprop}.

\section{Proof of Theorem \ref{mainthm1}} \label{sec:proof1}

The aim of this section is to prove Proposition \ref{mainprop} from which Theorem \ref{mainthm1} follows.

\subsection{Construction of global differential forms}

Before starting the proof of Proposition \ref{mainprop}, we recall the construction of global differential forms on an inseparable covering space.
We refer readers to \cite[Chapter V.5]{Kollar} and \cite{Okcyclic} for details.

Let $Z$ be a smooth variety defined over an algebraically closed field $\K$ of characteristic $p > 0$, $\mcL$ an invertible sheaf on $Z$, $m$ a positive integer divisible by $p$ and $s \in H^0 (Z, \mcL^m)$.
Let 
\[
\pi_U \colon U := \Spec (\oplus_{i \ge 0} \mcL^{-i}) \to Z
\] 
be the total space of the line bundle $\mcL$.
We denote by $y \in H^0 (U, \pi_U^* \mcL)$ the zero section and define
\[
Z [\sqrt[m]{s}] := (y^m - \pi_U^* s = 0) \subset U.
\]
Set $X = Z [\sqrt[m]{s}]$ and $\pi = \pi_U|_X \colon X \to Z$.
We call $X$ or $\pi \colon X \to Z$ the {\it covering of $Z$ obtained by taking the $m$th roots of $s$}.

The singularities of $X$ can be analyzed by critical points of the section $s$.
Let $\msq \in Z$ be a point and $x_1,\dots,x_n$ local coordinates of $Z$ at $\msq$.
Around $\msq$, we can write $s = f (x_1,\dots,x_n) \tau^m$, where $f \in \mcO_{Z,\msq}$ and $\tau$ is a local generator of $\mcL$ at $\msq$.
We write 
\[
f = \alpha + \ell + q + g,
\] 
where $\alpha \in \K$ and $\ell, q$ are linear, quadratic forms in $x_1,\dots,x_n$, respectively, and $g = g (x_1,\dots,x_n) \in (x_1,\dots,x_n)^3$.

\begin{Def}
Let $s \in H^0 (Z, \mcL^m)$ be a section and we keep the above setting.
We say that $s$ has a {\it critical point} at $\msq \in Z$ if $\ell = 0$.
We say that $s$ has a {\it nondegenerate critical point} at $\msq \in Z$ if $s$ has a critical point at $\msq$ and the following properties are satisfied.
\begin{itemize}
\item Either $p \ne 2$ or $p = 2$ and $n$ is even.
\item $q$ is a nondegenerate quadric.
\end{itemize}
\end{Def}

Note that nondegenerate critical points are isolated.
It is easy to see that $X$ is singular at $\msp \in X$ if and only if $s$ has a critical point at $\pi (\msp)$.
Thus, if the section $s$ has only nondegenerate critical points on $Z$, then the singularity of $X$ are isolated.

We can summarize the results of \cite{Kollar}, \cite{CTPcyclic} and \cite{Okcyclic} in the following form.

\begin{Lem}[{\cite[Chapter V.5]{Kollar}, \cite{CTPcyclic}, \cite[Proposition 4.1]{Okcyclic}}] \label{lem:covtech}
Let $X, Z, \mcL, m$ and $s$ be as above.
Assume that $s \in H^0 (Z, \mcL^m)$ has only nondegenerate critical points on $Z$.
Then there exists an invertible subsheaf $\mcM$ of the double dual $(\Omega_X^{n-1})^{\vee \vee}$ of the sheaf $\Omega_X^{n-1}$ and a resolution of singularities $\varphi \colon \tilde{X} \to X$ with the following properties.
\begin{enumerate}
\item $\mcM \cong \pi^* (\omega_Z \otimes \mcL^m)$.
\item $\varphi$ is universally $\CH_0$-trivial and $\varphi^* \mcM \inj \Omega_{\tilde{X}}^{n-1}$
\end{enumerate}
\end{Lem}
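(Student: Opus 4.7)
The plan is to construct $\mcM$ explicitly on the smooth locus of $X$ via the standard inseparable-cover calculus of Koll\'ar, then extend across the isolated singularities of $X$ using the explicit local resolutions of admissible critical points provided by the three cited sources. Let $X_0 \subset X$ denote the smooth locus and $Z_0 = \pi(X_0)$, which is the complement of the critical locus of $s$. Around any $\msq \in Z_0$, choose local coordinates $x_1,\dots,x_n$ and a trivialization $\tau$ of $\mcL$, so that $s = f\tau^m$ with $f \in \mcO_{Z,\msq}$. On the preimage in $X_0$, the defining relation $y^m = \pi^*(f\tau^m)$ yields, after applying $d$ and using $p \mid m$ to kill both $my^{m-1}dy$ and $mf\tau^{m-1}d\tau$,
\[
\sum_{i=1}^n (\pi^*\prt f/\prt x_i)\, dx_i \;=\; 0 \quad \text{in } \Omega^1_{X_0}.
\]
Since $\ell \ne 0$ at $\msq$, some $\prt f/\prt x_j$ is a unit on a neighborhood, and therefore
\[
\omega_j := (-1)^{j-1}\frac{dx_1 \wedge \cdots \wedge \widehat{dx_j} \wedge \cdots \wedge dx_n}{\prt f/\prt x_j}
\]
is a regular $(n-1)$-form on the corresponding open piece of $X_0$, independent of the choice of $j$ by the relation above.

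Next I would check that this local construction glues to a reflexive subsheaf $\mcM \subset (\Omega_X^{n-1})^{\vee\vee}$ isomorphic to $\pi^*(\omega_Z \otimes \mcL^m)$. Under a change of trivialization $\tau \mapsto u\tau$ with $u \in \mcO_Z^\times$, one has $f \mapsto u^{-m} f$, and the hypothesis $p \mid m$ forces $\prt(u^{-m}f)/\prt x_i = u^{-m}\prt f/\prt x_i$, so $\omega_j$ transforms by the cocycle $u^m$ of $\mcL^m$; under a change of the local $x$-coordinates with $\tau$ fixed, $\omega_j$ transforms like the generator $dx_1 \wedge \cdots \wedge dx_n$ of $\omega_Z$. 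Consequently the $\omega_j$'s patch to a nowhere-vanishing section of $\pi^*(\omega_Z \otimes \mcL^m)|_{X_0}$, giving the isomorphism $\mcM|_{X_0} \cong \pi^*(\omega_Z \otimes \mcL^m)|_{X_0}$. Since the singular locus of $X$ is isolated, hence of codimension $n \ge 2$, this isomorphism extends uniquely across $\Sing X$ by reflexivity, establishing assertion (1).

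For (2), I would construct the resolution $\varphi \colon \tilde X \to X$ locally around each singular point $\msp \in X$ using explicit blowup recipes adapted to each of the three cases in the definition of admissibility. When $q$ is a nondegenerate quadric, the singularity is of hypersurface double-point type and is resolved by a single blowup whose exceptional divisor is birational to a smooth quadric in $\mbP^{n-1}$, hence rational. In the two remaining cases ($p = 2$, $n$ odd, distinguished by whether $4 \mid m$), the normal form $q = \beta x_1^2 + x_2 x_3 + \cdots + x_{n-1}x_n$ together with the cubic condition $x_1^3 \in (\prt f/\prt x_1,\dots,\prt f/\prt x_n)$ produces a slightly worse singularity that admits a two-step resolution with rational exceptional loci, as carried out in \cite{CTPcyclic} and \cite[Proposition 4.1]{Okcyclic}. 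Outside the critical points of $s$, $\varphi$ is an isomorphism; and since every fiber of $\varphi$ is either a point or a rational variety with trivial $\CH_0$, universal $\CH_0$-triviality of $\varphi$ follows from the standard principle that a resolution all of whose fibers are universally $\CH_0$-trivial is itself universally $\CH_0$-trivial.

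The main obstacle is the pole analysis needed for the remaining half of (2), namely the injection $\varphi^*\mcM \hookrightarrow \Omega^{n-1}_{\tilde X}$: one must verify, for each exceptional divisor $E$ of $\varphi$ and each admissibility case, that the vanishing order along $E$ of the numerator $dx_1 \wedge \cdots \wedge \widehat{dx_j} \wedge \cdots \wedge dx_n$ in blowup coordinates is at least the vanishing order of the denominator $\prt f/\prt x_j$. Precisely this comparison is what dictates the fine structure of the admissibility conditions---nondegeneracy of $q$, length-two Jacobian quotient, and the cubic membership $x_1^3 \in (\prt f/\prt x_1,\dots,\prt f/\prt x_n)$---without which $\omega_j$ would acquire a genuine pole along some $E$. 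Once this pole-freeness is verified case by case, following \cite[Chapter V.5]{Kollar}, \cite{CTPcyclic}, and \cite[Proposition 4.1]{Okcyclic}, both conclusions of the lemma are established simultaneously.
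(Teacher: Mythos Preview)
The paper does not supply its own proof of this lemma; it is stated purely as a summary of results from \cite[Chapter V.5]{Kollar}, \cite{CTPcyclic}, and \cite[Proposition~4.1]{Okcyclic}, and is used as a black box thereafter. Your sketch is a faithful outline of the argument contained in those references---Koll\'ar's local form $\omega_j$ built from the relation $df = 0$ in $\Omega^1_X$, its globalization to $\pi^*(\omega_Z \otimes \mcL^m)$ via the transformation law, reflexive extension across the isolated singular locus, and the case-by-case explicit resolution with pole bookkeeping---so there is no independent proof in the paper to compare against, and nothing in your sketch conflicts with the cited sources.
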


We will refer to $\mcM$ in the above lemma as the {\it invertible subsheaf of $(\Omega_X^{n-1})^{\vee \vee}$ associated to the covering $\pi \colon X \to Z$}. 

\subsection{Proof of Proposition \ref{mainprop}}

Let $n, m, r$ and $p$ be as in the statement of Proposition \ref{mainprop}.
We set $l := n - m + 1$.
Recall that $p \ne 2$ when $m \ne n-1$ and $n$ is odd.
We freely use notation of Section \ref{sec:deffam}.

By Remark \ref{rem:known}, the proof is done when  $l = 2$, or equivalently when $(m,r) = (n-1,1)$.
In the following, we assume $l \ge 3$.  
Throughout this section, we work over an algebraically closed field $\K$ of characteristic $p$.

Let $X \in \Xi_{\K} = \Xi_{\K} (n,m,r)$ be a very general member with defining equation $a_m w^l + f = 0$.
The aim is to show that $X$ admits a universally $\CH_0$-trivial resolution $\varphi \colon \tilde{X} \to X$ of singularities such that $\tilde{X}$ is not universally $\CH_0$-trivial.
Let $Q = Q_{\K}$ be the WPS bundle over $\mbP^{n-r}$ defined by
\[
\begin{pmatrix}
u_0 & \cdots & u_{n-r} & & z & x_1 & \cdots & x_r & y \\
1 & \cdots & 1 & | & 0 & 0 & \cdots & 0 & -1 \\
0 & \cdots & 0 & | & l & 1 & \cdots & 1 & 1
\end{pmatrix}
\]
and define 
\[
Z := (a_m z + f = 0) \subset Q.
\]
We set 
\[
\begin{split}
\Delta &:= (x_1 = \cdots = x_r = y = 0) \subset Q, \\
\Delta_Z &:= \Delta \cap Z = (a_m = x_1 = \cdots = x_r = y = 0) \subset Z, \\
Z^{\circ} &:= Z \setminus \Delta_Z = Z \setminus (a_m = x_1 = \cdots = x_r = y = 0) \subset Z, \\ 
X^{\circ} &:= \pi^{-1} (Z^{\circ}) = X \setminus (a_m = x_1 = \cdots = x_r = y = 0) \subset X.
\end{split}
\]
We denote by $\pi \colon X \to Z$ the morphism defined by $\pi^*z = w^l$, which is the restriction of the natural morphism $P \to Q$.

\begin{Lem} \label{lem:nonsingcharp}
$Z^{\circ}$ is smooth, and $X$ is smooth along $X \setminus X^{\circ}$.
\end{Lem}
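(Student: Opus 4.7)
The plan is to verify both assertions by direct Jacobian computations in the toric affine charts of Section 2.1, exploiting the explicit form of the defining equations.

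For smoothness of $Z^\circ$, I first observe that the singular locus of the toric variety $Q$ is exactly $\Delta$, since the fiber $\mbP(l,1,\dots,1,1)$ of $Q\to\mbP^{n-r}$ is singular only at the $z$-point (the action of $\mbG_m^2$ is free outside $\Delta$), so $Z^\circ\subset Q\setminus\Delta$ lies in the smooth locus of $Q$. A direct bi-degree count shows that the sub-linear system $\{a_m z+f\}$ is in fact all of $|\mcO_Q(m,l)|$. At any point $p\in Q\setminus\Delta$, after reindexing $u_0(p)\ne 0$ and, say, $y(p)\ne 0$, I work in the chart $U_{0,y}$ with local coordinates $\tilde u_k, \tilde x_j, \tilde z$; the sections $u_0^{m+l}y^l$, $u_k u_0^{m+l-1}y^l$, $x_j u_0^{m+l-1}y^{l-1}$, $u_0^m z$ restrict respectively to $1, \tilde u_k, \tilde x_j, \tilde z$, a basis of $\mcO_{Q,p}/\mfm_p^2$. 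This $1$-jet separation, combined with a standard incidence-variety dimension count on the universal family in $H^0(Q,\mcO_Q(m,l))\times(Q\setminus\Delta)$, shows that a very general $Z$ is smooth on $Q\setminus\Delta$, i.e.\ $Z^\circ$ is smooth.

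For $X$ smooth along $X\setminus X^\circ$: by definition $X\setminus X^\circ=(a_m=x_1=\cdots=x_r=y=0)\cap P$, and the irrelevant ideal forces $w\ne 0$ on $(x_1=\cdots=x_r=y=0)\cap P$, so this locus is identified with the hypersurface $(a_m=0)$ inside a copy of $\mbP^{n-r}$. At a point $p$ here with $u_0(p)\ne 0$, I work in the chart $U_{0,w}$ of $P$ with local coordinates $\tilde u_k, \tilde x_j, \tilde y$; the defining equation $a_m w^l+f=0$ of $X$ becomes $F:=a_m(1,\tilde u)+f(1,\tilde u,\tilde x,\tilde y)=0$. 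Since every monomial of $f$ has total $(\tilde x,\tilde y)$-degree equal to $l\ge 2$, at $\tilde x(p)=\tilde y(p)=0$ every partial derivative of $f$ in every variable vanishes, so the Jacobian of $F$ at $p$ reduces to $(\partial a_m(1,\tilde u)/\partial\tilde u_k)_k\big|_{\tilde u(p)}$. Hence $X$ is smooth at $p$ iff $(a_m=0)\subset\mbP^{n-r}$ is smooth at the image of $p$, and for very general $a_m$ this holds everywhere by the same incidence-variety argument applied to the linear system of degree-$m$ hypersurfaces on $\mbP^{n-r}$.

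The main potential obstacle is the failure of Bertini-for-smoothness in positive characteristic for general base-point-free linear systems. This is avoided in both parts by the $1$-jet separation property: the associated morphism has injective differentials at every point, and smoothness of a general member then follows from a characteristic-free incidence-variety dimension count, sidestepping the usual char-$p$ subtleties.
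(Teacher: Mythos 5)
Your proposal is correct and follows essentially the same route as the paper: one-jet separation of $|\mcO_Q(m,l)|$ on the smooth locus $Q\setminus\Delta$ combined with an incidence-variety dimension count for the smoothness of $Z^{\circ}$, and, for the second assertion, the observation that on $X\setminus X^{\circ}$ (where $w\ne 0$ and $x=y=0$, so all partials of $f$ vanish because each monomial of $f$ has total $(x,y)$-degree $l\ge 2$) the differential of the defining equation reduces to $w^l\,da_m$, which is nonzero for general $a_m$. The only cosmetic omission is that you write out the jet computation only in the chart $U_{0,y}$; points of $Q\setminus\Delta$ with $y=0$ require the entirely analogous computation in a chart $U_{i,j}$, which the paper records separately.
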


\begin{proof}
We have 
\[
X \setminus X^{\circ} = (a_m = x_1 = \cdots = x_r = y = 0) \subset P.
\]
Since $a_m$ is general, we may assume that the hypersurface $a_m = 0$ in $\mbP^{n-r}$ is smooth.
This implies that $a_m w^l$ vanishes at any point of $X \setminus X^{\circ}$ with multiplicity at most $1$ and thus $X$ is nonsingular at any point of $X \setminus X^{\circ}$.

We claim that the restriction map
\[
H^0 (Q, \mcO_Q (m,l)) \to \mcO_Q (m,l) \otimes (\mcO_{Q,\msq}/\mfm_{\msq}^2)
\]
is surjective for any point $\msq \in Q^{\circ} := Q \setminus \Delta$.
Note that $Q^{\circ}$ is the smooth locus of $Q$.
We set 
\[
\begin{split} 
U_{i,j} &= (u_i \ne 0) \cap (x_j \ne 0) \subset Q, \text{ for $i = 0,\dots,n-r$, $j = 1,\dots,r$}, \\
U_{i,y} &= (u_i \ne 0) \cap (y \ne 0) \subset Q, \text{ for $i = 0,\dots,n-r$},
\end{split}
\] 
so that $Q^{\circ}$ is covered by the $U_{i,j}$ and the $U_{i,y}$.
Suppose that $\msq \in U_{i,j}$.
Without loss of generality, we may assume $\msq \in U_{0,1}$.
We have an isomorphism
\[
U_{0,1} \cong \mbA^{n+1} = \Spec \K [\tilde{u}_1,\dots,\tilde{u}_{n-r},\tilde{z},\tilde{x}_2,\dots,\tilde{x}_r,\tilde{y}],
\]
where
\[
\tilde{u}_i = \frac{u_i}{u_0}, \ 
\tilde{z} = \frac{z}{x_1^l}, \ 
\tilde{x}_j = \frac{x_j}{x_1}, \ 
\tilde{y} = \frac{y u_0}{x_1}.
\]
There are sections
\[
u_0^m x_1^l, \ 
u_i u_0^{m-1} x_1^l, \ u_0^m z, \ u_0^m x_j x_1^{l-1}, \ u_0^{m+1} y x_1^{l-1} \in H^0 (Q, \mcO_Q (m,l)), 
\]
and their restriction to $U_{0,1}$ are as follows
\[
1, \ 
\tilde{u}_i, \ \tilde{z}, \ \tilde{x}_j, \ \tilde{y}.
\]
This shows that the restriction map is surjective for $\msq \in U_{0,1}$, and thus for any $\msq \in U_{i,j}$.
Suppose that $\msq \in U_{i,y}$.
We may assume $\msq \in U_{0,y}$.
We have an isomorphism
\[
U_{0,y} \cong \mbA^{n+1} = \Spec \K [\tilde{u}_1,\dots,\tilde{u}_{n-r},\tilde{z},\tilde{x}_1,\dots,\tilde{x}_r],
\]
where
\[
\tilde{u}_i = \frac{u_i}{u_0}, \ 
\tilde{z} = \frac{z}{u_0^l y^l}, \ 
\tilde{x}_j = \frac{x_j}{u_0 y}.
\]
There are sections
\[
u_0^{m+l} y^l, \ u_i u_0^{m+l-1} y^l, \ u_0^m z, \ u_0^{m+l-1} x_j y^{l-1} \in H^0 (Q, \mcO_Q (m,l))
\]
and their restriction to $U_{0,y}$ are as follows
\[
1, \ \tilde{u}_i, \ \tilde{z}, \ \tilde{x}_j.
\] 
This shows that the restriction map is surjective for $\msq \in U_{0,y}$, and thus for any $\msq \in U_{i,y}$.
Therefore the claim is proved.

Note that $Z$ is a general member of the linear system $|\mcO_Q (m,l)|$.
By the surjectivity of the above restriction map, it follows that, for a given point $\msq \in Q^{\circ}$, $n+2$ independent conditions are imposed for members of $|\mcO_Q (m,l)|$ to have a singular point at $\msq$. 
By the dimension counting argument, we can conclude that a general member of $|\mcO_Q (m,l)|$ is smooth outside $\Delta$, and the proof is completed.
\end{proof}

We set $\mcL = \mcO_Z (0,1)$ and $\mcL^{\circ} = \mcL|_{Z^{\circ}}$.
Note that $z$ can be viewed as a global section of $\mcL^l = \mcL^{\otimes l}$.
We see that $\pi^{\circ} = \pi|_{X^{\circ}} \colon X^{\circ} \to Z^{\circ}$ is the morphism obtained by taking the $p$th roots of $z \in H^0 (Z^{\circ}, \mcL^l)$.
In the following we choose and fix a general $a = a_m \in \K [u]_m$.
Then, we will show that the section $z \in H^0 (Z, \mcL^l)$ has only admissible critical points on $Z^{\circ}$ for a general $f \in \K [x,y]_{(m,l)}$.

Let $R$ be the WPS bundle over $\mbP^{n-r}$ defined by
\[
\begin{pmatrix}
u_0 & \cdots & u_{n-r} & & x_1 & \cdots & x_r & y \\
1 & \cdots & 1 & | & 0 & \cdots & 0 & -1 \\
0 & \cdots & 0 & | & 1 & \cdots & 1 & 1
\end{pmatrix}.
\]
We have a natural projection $Q \ratmap R$ which is defined outside $\Delta \subset Q$.
Then its restriction $Z \ratmap R$ to $Z$ is defined outside $\Delta_Z$ and we denote by $\rho \colon Z^{\circ} \to R$ the restriction of $Q \ratmap R$ on $Z^{\circ} = Z \setminus \Delta_Z$.

\begin{Lem} \label{lem:critprelim}
The section $z \in H^0 (Z, \mcL^l)$ does not have a critical point along $(a = 0) \cap Z^{\circ}$.
Moreover, $z$ has a nondegenerate critical point at $\msq \in (a \ne 0) \cap Z^{\circ}$ if and only if the section $a^{p-1} f \in H^0 (R, \mcO_R (p m,l))$ has a nondegenerate critical point at $\rho (\msq) \in (a \ne 0) \cap R$.
\end{Lem}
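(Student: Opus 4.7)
The plan is to work locally on a standard affine chart of $Q$ and exploit the identity $d(\tilde a^p)=0$ that holds in characteristic $p$. On the chart $U_{0,1}=(u_0\ne 0)\cap(x_1\ne 0)$ with affine coordinates $\tilde u_i$, $\tilde z=z/x_1^l$, $\tilde x_j$, $\tilde y$ as in the proof of Lemma \ref{lem:nonsingcharp}, the hypersurface $Z$ is cut out by $\tilde a\tilde z+\tilde f=0$, where $\tilde a=a/u_0^m$ and $\tilde f=f/(u_0^m x_1^l)$. Under suitable local trivializations of $\mcL^l$ and of $\mcO_R(pm,l)$, the section $z$ is represented by $\tilde z|_Z$ and $a^{p-1}f$ by $\tilde a^{p-1}\tilde f$.

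For the first assertion, let $\msq\in(a=0)\cap Z^\circ$. Then $\tilde a(\msq)=\tilde f(\msq)=0$ and the partial $\prt(\tilde a\tilde z+\tilde f)/\prt\tilde z=\tilde a$ vanishes at $\msq$. Since $Z^\circ$ is smooth at $\msq$ by Lemma \ref{lem:nonsingcharp}, this forces $\prt/\prt\tilde z\in T_\msq Z$, and pairing with $d\tilde z$ gives $1$. Hence $d\tilde z|_\msq$ is a nonzero cotangent vector on $Z$, so $\tilde z$ belongs to some regular system of parameters at $\msq$; in particular its linear part is nonzero, and $z$ has no critical point at $\msq$.

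For the second assertion, fix $\msq\in(a\ne 0)\cap Z^\circ$ and set $c=\tilde a(\msq)\ne 0$. On this locus $\rho$ is an isomorphism onto $(a\ne 0)\subset R$, and the defining equation of $Z$ rewrites as
\[
\tilde a^{p-1}\tilde f\big|_Z=-\tilde a^p\cdot\tilde z.
\]
Since $d(\tilde a^p)=p\,\tilde a^{p-1}\,d\tilde a=0$ in characteristic $p$, differentiating gives $d(\tilde a^{p-1}\tilde f)|_Z=-\tilde a^p\,d\tilde z$; as $-\tilde a^p$ is a unit at $\msq$, the Jacobian ideals $(\prt\tilde z/\prt x_i)$ and $(\prt(\tilde a^{p-1}\tilde f)/\prt x_i)$ coincide in the common local ring $\mcO_{Z,\msq}=\mcO_{R,\rho(\msq)}$. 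This at once yields the equivalence of the critical-point conditions together with the equality
\[
\length\bigl(\mcO_{Z,\msq}/(\prt\tilde z/\prt x_i)\bigr)=\length\bigl(\mcO_{R,\rho(\msq)}/(\prt(\tilde a^{p-1}\tilde f)/\prt x_i)\bigr),
\]
which is the length-based admissibility criterion.

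To finish, the finer admissibility clauses — nondegeneracy of the quadratic part $q$, and (when $p=2$ and $n$ is odd) the normalized form $q=\beta x_1^2+\sum x_{2i}x_{2i+1}$ together with $x_1^3\in g$ — are transferred by a direct Taylor comparison. Using $(x+y)^p=x^p+y^p$ one obtains $\tilde a^p\equiv c^p\pmod{\mfm^p}$ in the completion of $\mcO_{Z,\msq}$: for $p\ge 3$ this forces $\tilde a^p\equiv c^p\pmod{\mfm^3}$, so the Taylor expansions of $\tilde a^{p-1}\tilde f=-\tilde a^p\tilde z$ and of $\tilde z$ agree to the required order up to the nonzero scalar $-c^p$; for $p=2$ one has $\tilde a^2\equiv c^2+A_1^2\pmod{\mfm^3}$, where $A_1$ is the linear part of $\tilde a$, so only a square of a linear form is added to the quadratic part of $\tilde a^{p-1}\tilde f$, which does not affect the polar form in characteristic $2$, and the cubic parts at critical points still differ by the nonzero scalar $c^2$. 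The main technical step I anticipate is this characteristic-$2$ bookkeeping — in particular verifying that the extra square does not disrupt the normal-form invariants $\beta$ and $x_1^3\in g$ — which becomes routine once the identity $\tilde a^{p-1}\tilde f|_Z=-\tilde a^p\tilde z$ and the vanishing of $d(\tilde a^p)$ are in hand.
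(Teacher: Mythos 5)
Your overall route is the same as the paper's. The first assertion is argued exactly as in the paper: the vanishing of $\prt(az+f)/\prt z=a$ at $\msq$ together with smoothness of $Z^{\circ}$ forces $dz$ to be nonzero on $T_{\msq}Z$, so $z$ is part of a regular system of parameters. For the second assertion you use the same identity $a^{p-1}f=-a^{p}z$ on $Z$ together with $d(\tilde a^{p})=0$; the paper's own proof is essentially a one-line assertion that admissibility is preserved under multiplication by the unit $a^{p}$, and your write-up supplies details it omits. Your treatment of the critical-point equivalence and of the length condition is correct and complete: since $\prt(\tilde a^{p}\tilde z)/\prt x_i=\tilde a^{p}\,\prt\tilde z/\prt x_i$, the two Jacobian ideals literally coincide, and for $p\ge 3$ the congruence $\tilde a^{p}\equiv c^{p}\pmod{\mfm^{3}}$ settles all remaining clauses, as you say.

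The one genuine gap is precisely the step you defer as ``routine characteristic-$2$ bookkeeping.'' When $p=2$, at a critical point where $\tilde z$ takes the value $\alpha$, the quadratic part of $\tilde a^{2}\tilde z$ is $c^{2}q+\alpha A_1^{2}$. You correctly note that the added square does not change the polar form, but in the case $p=2$, $\dim Z$ odd and $4\mid l$ (the degree of the covering $w^{l}=z$), admissibility is \emph{not} a condition on the polar form alone: it also requires the quadric $q=0$ to be smooth, equivalently $q(v)\ne 0$ for $v$ spanning the one-dimensional radical of the polar form. The added term changes this value to $c^{2}q(v)+\alpha A_1(v)^{2}$, which can pass from nonzero to zero (or conversely) whenever $\alpha\ne 0$ and $A_1(v)\ne 0$; in the normal form this says that $\beta$ is invariant under multiplication by fourth powers of units but not under multiplication by squares. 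So the asserted ``iff'' is not automatic in that subcase, and your sketched justification (invariance of the polar form, plus the cubic part scaling by $c^{2}$ --- both of which are fine) does not cover it; one needs an additional argument, e.g.\ that $\alpha=0$ at the relevant points, that $A_1$ annihilates the radical, or a genericity statement excluding the bad locus. To be fair, the paper's proof asserts the same equivalence with no justification at all, so your proposal is not less complete than the source --- but the step you flag as routine is the only one carrying real content, and as written it is not closed.
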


\begin{proof}
Let $\msq \in (a = 0) \cap Z^{\circ}$.
Then, since $\prt (a z + f)/\prt z = a$ and $Z^{\circ}$ is non-singular, $z$ (or its translation) can be chosen as a part of local coordinates of $Z^{\circ}$ at $\msq$ and this clearly implies that $z$ does not have a critical point at $\msq$.
This proves the first assertion.

Let $\msq \in (a \ne 0) \cap Z^{\circ}$.
Then, since $a$ does not vanish at $\msq$, we see that $z$ has an admissible critical at $\msq$ if and only if so does $a^p z$.
By the defining equation of $Z$, we have $- a^p z = a^{p-1} f$.
It follows that $z$ has an admissible critical point at $\msq$ if and only if so does  $a^{p-1} f \in H^0 (Z, \mcO_Z (p m, l))$.
It is now easy to see that the latter is equivalent to saying that the section $a^{p-1} f$ viewed as a section of $\mcO_R (p m,l))$ has an admissible critical point at $\rho (\msq) \in (a \ne 0) \cap R$.
This completes the proof. 
\end{proof}

For a positive integer $k$ and a point $\msq \in R$, we define
\[
\rest^k_{\msq} \colon H^0 (R,\mcO_R (m, l)) \to \mcO_R (m,l) \otimes (\mcO_{R,\msq}/\mfm^k_{\msq}).
\]
We set
\[
\begin{split}
V_{i,j} &:= (u_i \ne 0) \cap (x_j \ne 0) \subset R, \text{ for $i = 0,\dots,n-r, j = 1,\dots,r$}, \\
V_{i,y} &:= (u_i \ne 0) \cap (y \ne 0) \subset R, \text{ for $i = 0,\dots,n-r$}, \\
V_y &:= \bigcup_{0 \le i \le n-r} V_{i,y} = (y \ne 0) \subset R, \\
\Gamma &:= (y = 0) \subset R.
\end{split}
\]

\begin{Lem} \label{lem:restR}
\begin{enumerate}
\item $\rest^2_{\msq}$ is surjective for any $\msq \in \Gamma$.
\item $\rest^3_{\msq}$ is surjective for any $\msq \in V_y$.
\end{enumerate}
\end{Lem}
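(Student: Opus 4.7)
The plan is to prove both parts by exhibiting, for a standard affine chart containing the given point $\msq$, global sections of $\mcO_R(m,l)$ whose restrictions span the target jet space $\mcO_R(m,l)\otimes(\mcO_{R,\msq}/\mfm^k_\msq)$. Since $\Gamma$ is covered by the charts $V_{i,j}$ and $V_y$ by the charts $V_{i,y}$, it suffices by symmetry to treat a single representative chart in each case, say $V_{0,1}$ for part (1) and $V_{0,y}$ for part (2). Furthermore, once surjectivity is checked at the ``origin'' $\msq_0$ of the chosen chart (the point at which all non-trivialized local coordinates vanish), it propagates to every other point $\msq$ of the chart by a triangular-expansion argument: re-expanding a monomial in translated local coordinates centred at $\msq$ reproduces the same top-degree monomial, so the spanning property modulo $\mfm^k_\msq$ is preserved.

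For part (1), the local coordinates on $V_{0,1}$ are $\tilde u_i = u_i/u_0$, $\tilde x_j = x_j/x_1$ (for $j\neq 1$), and $\tilde y = u_0 y/x_1$, and the local trivialization of $\mcO_R(m,l)$ is $u_0^m x_1^l$. The bi-degree $(m,l)$ sections
\[
u_0^m x_1^l,\qquad u_i u_0^{m-1} x_1^l,\qquad u_0^m x_j x_1^{l-1},\qquad u_0^{m+1} y x_1^{l-1}
\]
restrict at $\msq_0$ to $1,\ \tilde u_i,\ \tilde x_j,\ \tilde y$, and these span $\mcO_{R,\msq_0}/\mfm^2_{\msq_0}$. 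For part (2), the local coordinates on $V_{0,y}$ are $\tilde u_i = u_i/u_0$ and $\tilde x_j = x_j/(u_0 y)$, with local trivialization $u_0^{m+l} y^l$. A direct check shows that for multi-indices $I, J$ with $a=|I|$ and $b=|J|$, the monomial section
\[
u_0^{\,m+l-a-b}\, u^I\, x^J\, y^{\,l-b}
\]
has bi-degree $(m,l)$ and restricts at the origin of $V_{0,y}$ to $\tilde u^I\, \tilde x^J$. Its exponents are non-negative whenever $a+b\le m+l$ and $b\le l$, and both conditions hold for all $a+b\le 3$ thanks to the standing hypothesis $l\ge 3$ (and $m\ge 1$). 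Consequently every monomial in $\tilde u,\tilde x$ of total degree at most $3$ lies in the image of $\rest^4_{\msq_0}$, yielding surjectivity.

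The only delicate point here is degree bookkeeping: one must check that $\mcO_R(m,l)$ carries enough global sections to realize each prescribed monomial in the local coordinates, which amounts to confirming non-negativity of the lifting exponents. This is precisely where the assumption $l\ge 3$ enters (for part (2)); no further obstacle is anticipated.
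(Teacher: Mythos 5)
Your proof is correct and takes essentially the same approach as the paper: both exhibit explicit monomial sections of bi-degree $(m,l)$ (namely $u_0^{m+l-a-b}u^Ix^Jy^{l-b}$ and their analogues on $V_{0,1}$) whose restrictions to the standard chart realize all monomials of degree $<k$ in the local coordinates. The only cosmetic difference is in handling an arbitrary point: the paper moves $\msq$ to the chart origin by a graded linear change of the $u$- and $x$-coordinates, whereas you note that the restrictions are exact monomials spanning all polynomials of degree $\le k-1$ on the whole chart and hence surject onto every $k$-jet space; both reductions are valid.
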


\begin{proof}
Let $\msq \in \Gamma$ be a point.
Since $\Gamma$ is covered by the $U_{i,j}$, we may assume, by replacing coordinates $u, x$, that $\msq = (1\!:\!0\!:\!\cdots\!:\!0; 1\!:\!\cdots\!:\!0\!:\!0) \in V_{0,1}$.
We have an isomorphism
\[
V_{0,1} \cong \mbA^n = \Spec \K [\tilde{u}_1,\dots,\tilde{u}_{n-r},\tilde{x}_2,\dots,\tilde{x}_r,\tilde{y}],
\]
where
\[
\tilde{u}_i = \frac{u_i}{u_0}, \ 
\tilde{x}_j = \frac{x_j}{x_1}, \ 
\tilde{y} = \frac{y u_0}{x_1},
\]
and $\tilde{u}_1,\dots,\tilde{u}_{n-r},\tilde{x}_2,\dots,\tilde{x}_r,\tilde{y}$ can be choosen as local coordinates of $R$ at $\msq$. 
The restriction map is simply described as
\[
\rest^2_{\msq} (g (u,x,y)) = g (1,\tilde{u}_1,\dots, \tilde{u}_{n-r},1,\tilde{x}_2,\dots,\tilde{x}_r,\tilde{y}) \pmod{\mfm^2_{\msq}}
\]
for $g = g (u,x,y) \in \K [u,x,y]_{(m,l)}$.
Since $l \ge 1$ (in fact $l \ge 3$) and $m \ge 1$, we have sections
\[
u_0^m x_1^l, \ 
u_0^{m-1} u_i x_r^l, \ 
u_0^m x_i x_1^{l-1}, \ 
u_0^{m+1} y x_1^{l-1} \in H^0 (R, \mcO_R (m,l))
\]
and their restriction to $V_{0,1}$ are $1,\tilde{u}_i,\tilde{x}_i,\tilde{y}$, respectively.
Thus $\rest^2_{\msq}$ is surjective and (1) is proved.

Let $\msq \in V_y$ be a point.
Replacing coordinates, we may assume $\msq = (1\!:\!0\!:\!\cdots\!:\!0;  0\!:\!\cdots\!:\!0\!:\!1) \in V_{0,y}$. 
We have an isomorphism
\[
V_{0,y} \cong \mbA^n = \K [\tilde{u}_1,\dots,\tilde{u}_{n-r},\tilde{x}_1,\dots,\tilde{x}_r],
\]
where
\[
\tilde{u}_i = \frac{u_i}{u_0}, \ 
\tilde{x}_j = \frac{x_j}{y u_0}.
\]
In this case $\tilde{u}_1,\dots,\tilde{u}_{n-r},\tilde{x}_1,\dots,\tilde{x}_r$ can be chosen as local coordinates of $R$ at $\msq$ and the restriction map is described as
\[
\rest^4_{\msq} (g (u,x,y)) = g (1,\tilde{u}_1,\dots,\tilde{u}_{n-r},\tilde{x}_1,\dots,\tilde{x}_r,1) \pmod{\mfm^4_{\msq}}
\]
for $g = g (u,x,y) \in \K [u,x,y]_{(m,l)}$.
Since $l \ge 2$, there are sections
\[
u_0^{n+1} y^l, \ u_0^n u_i y^l, \ u_0^{n+1} x_j y^{l-1}, \ 
u_0^{n-1} u_{i_1} u_{i_2} y^l, \ u_0^n u_i x_j y^{l-1}, \ u_0^{n+1} x_{j_1} x_{j_2} y^{l-2},
\]
of bi-degree $(m,l)$ whose image via $\rest^3_{\msq}$ form a basis of $\mcO_R/\mfm_{\msq}^3$.
This proves (2).
\end{proof}

\begin{Lem} \label{lem:admcrit}
The section $z \in H^0 (Z^{\circ}, \mcL^l)$ has only nondegenerate critical points on $Z^{\circ}$.
\end{Lem}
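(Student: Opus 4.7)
My plan is to combine Lemma \ref{lem:critprelim} with a standard incidence variety dimension count, using Lemma \ref{lem:restR} to control jets of $f$ at each point. By Lemma \ref{lem:critprelim}, $z$ has no critical point on $(a = 0) \cap Z^{\circ}$, and at a point $\msq \in (a \ne 0) \cap Z^{\circ}$ the critical point of $z$ is admissible if and only if the section $a^{p-1} f \in H^0 (R, \mcO_R (pm,l))$ has an admissible critical point at $\rho (\msq) \in R$. Hence it suffices to show that, for general $f$, $a^{p-1} f$ has only admissible critical points on $(a \ne 0) \cap R$.

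Set $H := H^0 (R, \mcO_R (m, l))$ and treat the covering $R = V_y \cup \Gamma$ case by case. On $\Gamma \cap (a \ne 0)$, I aim to show that a general $f$ yields no critical points of $a^{p-1} f$ at all. For a fixed $\msq$, the critical condition is that the linear part of the $1$-jet of $a^{p-1} f$ at $\msq$ vanishes, equivalently $n$ linear conditions on the $1$-jet of $f$ at $\msq$, since multiplication by $a^{p-1}$ is a local isomorphism on jets when $a (\msq) \ne 0$. By surjectivity of $\rest^2_{\msq}$ (Lemma \ref{lem:restR}(1)), these are $n$ independent conditions on $f$, cutting out a codim-$n$ subset of $H$. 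As $\dim \Gamma = n - 1$, the associated incidence variety has dimension at most $\dim H - 1$, and its projection to $H$ is proper.

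On $V_y \cap (a \ne 0)$, I aim to show that every critical point of $a^{p-1} f$ is admissible for general $f$. For a fixed $\msq$, non-admissibility is given by the combination of the critical condition ($n$ linear conditions on the $1$-jet) together with a proper closed condition on the $3$-jet coming from the definition of admissible critical point: in the generic characteristic case this is the degeneracy of the quadric $q_2$ (codim $1$), and in the characteristic $2$ subcases it is a codim-$\ge 1$ condition on $q_2$ and the cubic datum. Hence non-admissibility is codim $\ge n+1$ in the $3$-jet space at $\msq$, and by surjectivity of $\rest^4_{\msq}$ (Lemma \ref{lem:restR}(2)) it is codim $\ge n+1$ in $H$. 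Combined with $\dim V_y = n$, the associated incidence variety has dimension at most $\dim H - 1$, and its projection to $H$ is proper.

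The main technical point is verifying that non-admissibility is a codim-$\ge 1$ condition on critical $3$-jets uniformly across every case of the definition of admissible critical point, especially in the characteristic $2$ subcases where admissibility mixes conditions on the quadratic form and the cubic part. In each listed case, however, non-admissibility is cut out by vanishing of a nontrivial scheme-theoretic condition (discriminant vanishing, failure of the length $2$ condition, or vanishing of the $x_1^3$-coefficient), hence forms a proper closed subset of the relevant jet locus, yielding the required codimension bound.
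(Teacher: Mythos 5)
Your proposal is correct and follows essentially the same route as the paper: reduce via Lemma \ref{lem:critprelim} to the section $a^{p-1}f$ on $(a\ne 0)\subset R$, then use surjectivity of $\rest^2_{\msq}$ on $\Gamma$ (to exclude critical points there by a codimension-$n$ versus $\dim\Gamma=n-1$ count) and surjectivity of $\rest^4_{\msq}$ on $V_y$ (to force admissibility of the remaining critical points). The only cosmetic difference is that the paper phrases the jet conditions via the subspace $W=\{a^{p-1}g\}$ and the maps $\rest^k_{W,\msq}$, while you work with $f$ directly and note that multiplication by $a^{p-1}$ is a local isomorphism on jets --- these are the same observation.
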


\begin{proof}
By Lemma \ref{lem:critprelim}, it is enough to show that the section $a^{p-1} f \in H^0 (R, \mcN)$ has only nendegenerate critical points on $V_a := (a \ne 0) \subset R$.

We set
\[
W = \{\, a^{p-1} g \mid g = g (u,x,y) \in \K [u,x,y]_{(m,l)} \,\},
\]
which we identify with a subspace of $H^0 (R,\mcO_R (pm,l))$.
For a point $\msq \in V_a$ and a positive integer $k$, we define
\[
\rest^k_{W,\msq} \colon W \to \mcO_R (pm,l) \otimes (\mcO_{R,\msq} /\mfm^k_{\msq}).
\]
For an element $a^{p-1} g \in W$, we have $g \in H^0 (R, \mcO_R (l,m))$ and 
\[
\rest^k_{W,\msq} (a^{p-1} g) = \bar{a}_{\msq}^{p-1} \rest^k_{\msq} (g),
\]
where $\bar{a}_{\msq}$ is the restriction of $a \in H^0 (R,\mcO_R (m,0))$ to $\mcO_R (m,0) \otimes (\mcO_{R,\msq}/\mfm_{\msq}^k)$.
Since $a$ does not vanish at $\msq \in V_a$, we see that surjectivity of $\rest^k_{\msq}$ implies that of $\rest^k_{W,\msq}$. 

By Lemma \ref{lem:restR}, $\rest^3_{\msq}$ is surjective for any $\msq \in V_a \cap V_y$, hence so is $\rest^3_{W,\msq}$.
This means that a general $a^{p-1} f \in W$ has only nondegenerate critical points on $V_a \cap V_y$.
Again by Lemma \ref{lem:restR}, $\rest^2_{\msq}$ is surjective for any $\msq \in V_a \cap \Gamma$, hence so is $\rest^2_{W,\msq}$.
Surjectivity of $\rest^2_{W,\msq}$ implies that $n = \dim R$ independent conditions are imposed for elements of $W$ to have a critical point at $\msq \in V_a \cap \Gamma$.
Since $\dim \Gamma = n-1$, we conclude, by counting dimensions, that a general $a^{p-1} f \in W$ does not have a critical point along $V_a \cap \Gamma$.
This completes the proof.
\end{proof}

\begin{proof}[Proof of \emph{Proposition \ref{mainprop}} and \emph{Theorem \ref{mainthm1}}]
Let $X \in \Xi_{\K}$ be as above.
Let $\mcM^{\circ}$ be the invertible subsheaf of $(\Omega^{n-1}_{X^{\circ}})^{\vee \vee}$ associated to $\pi^{\circ} = \pi|_{X^{\circ}} \colon X^{\circ} \to Z^{\circ}$ and let $\mcM$ be the pushforward of $\mcM^{\circ}$ via the open immersion $X^{\circ} \inj X$.
We set $\lambda = n-m-r$.
Then we have
\[
\mcM^{\circ} \cong {\pi^{\circ}}^* (\omega_{Z^{\circ}} \otimes \mcL^l) \cong {\pi^{\circ}}^* \mcO_{Z^{\circ}} (-\lambda,\lambda) \cong \mcO_{X^{\circ}} (-\lambda,\lambda),
\]
so that $\mcM \cong \mcO_X (-\lambda,\lambda)$.
Thus we have $H^0 (X, \mcM) \ne 0$ since $\lambda \ge 0$.
Note that, since $X \setminus X^{\circ}$ is smooth by Lemma \ref{lem:nonsingcharp}, the singularity of $X$ are all contained in $X^{\circ}$ and is coming from nondegenerate critical points of $z \in H^0 (Z^{\circ}, \mcL^l)$. 
By Lemmas \ref{lem:admcrit} and \ref{lem:covtech}, there exists a universally $\CH_0$-trivial resolution $\varphi \colon \tilde{X} \to X$ such that $\varphi^*\mcM \inj \Omega^{n-1}_{\tilde{X}}$.
Then we have $H^0 (\tilde{X}, \Omega^{n-1}_{\tilde{X}}) \ne 0$ and the proof of Proposition \ref{mainprop} is completed.
As explained in Section \ref{subsec:reduction}, Theorem \ref{mainthm1} follows from Proposition \ref{mainprop}.
\end{proof}

\section{Proof of Theorem \ref{mainthm2}}

This section is devoted to the proof of Theorem \ref{mainthm2}.
We work over $\mbC$.
Let $n \ge 3$ and $e$ be as in the statement of Theorem \ref{mainthm2}.
Let $\msp_1,\dots,\msp_e$ be distinct points of $\mbP^{n+1} = \mbP^{n+1}_{\mbC}$ and let $V$ be a very general hypersurface of degree $n+1$ in $\mbP^{n+1}$ containing $\msp_i$ with multiplicity $2$ for $i = 1,\dots, e$, that is, $V \in |\mcI^2_{\msp_1} \cdots \mcI^2_{\msp_e} \mcO_{\mbP^{n+1}} (n+1)|$.
We will show that $V$ is not stably rational.

We set $r = \max \{\, l \in \mbZ \mid 2^r + r \le n \, \}$.
Note that $e \le r + 1$ and $r \ge 1$.
We can choose and fix an $r$-plane $L \subset \mbP^{n+1}$ containing $\msp_1,\dots,\msp_e$.
Then we can degenerate $V$ to a very general hypersurface $V'$ of degree $n+1$ in $\mbP^{n+1}$ containing $L$ with multiplicity $2$.
Let $\psi \colon X \to V'$ be the blowup of $V'$ along $L$ so that $X$ is a very general member of $\Lambda_{\mbC} (n,2,r)$ and $\psi$ is a resolution of singularities.
We have $n \ge 2^r + r \ge 2 + r$.
Let $p$ be a prime number dividing $l := n - 2 + 1 = n - 1 \ge 2$.
We assume $p \ne 2$ when $n \ne 3$, which is possible since $n - 1$ is not a power of $2$ in this case.
We can degenerate $X$ to a very general member $X' \in \Lambda_{\K} (n, 2, r)$, where $\K$ is an algebraically closed field of characteristic $p$. 
By Proposition \ref{mainprop} and the specialization theorem \cite[Th\'eor\`eme 1.14]{CTP}, $X$ is not universally $\CH_0$-trivial.
Hence, again by the specialization theorem \cite[Th\'eor\`eme 1.14]{CTP}, the proof will be completed if we show that  $\psi$ is universally $\CH_0$-trivial.

The $\psi$-exceptional divisor $E$ is $(y = 0) \cap X$, which is isomorphic to a very general hypersurface of bi-degree $(2,n-1)$ in $\mbP^{n-r} \times \mbP^r$.
Moreover, the restriction $\psi|_E \colon E \to L$ coincides with the restriction of the second projection $\mbP^{n-r} \times \mbP^r \to \mbP^r \cong L$ to $E$.
It follows that $\psi|_E \colon E \to L$ is a quadric bundle over $L$.
Let $\xi \in L$ be any scheme point of $Y$ and let $E_{\xi} = \psi^{-1} (\xi)$ be the fiber considered as a variety over the residue field $k (\xi)$ of $V'$ at $\xi$, which is a smooth quadric in $\mbP^{n-r}_{k (\xi)}$.
The transcendental degree of $k (\xi)$ over $\mbC$ is at most $r$.
Thus, by \cite[Theorem 2a]{Nagata}, the field $k (\xi)$ is $C_r$.
Here we say that a field $K$ is $C_i$ if every homogeneous form in $K$ in $n$ variables and of degree $d$ with $n > d^i$ has a nontrivial zero in $K$.
Moreover we have $n-r+1 > 2^r$ by the definition of $r$.
It follows from the definition of $C_r$-field that $E_{\xi}$ has a $k (\xi)$-rational point.
This shows that $E_{\xi}$ is rational over $k (\xi)$ and $E_{\xi}$ is universally $\CH_0$-trivial.
By \cite[Proposition 1.8]{CTP}, we conclude that $\psi$ is a universally $\CH_0$-trivial resolution, as desired.
Therefore the proof of Theorem \ref{mainthm2} is completed.

\end{document}